\newcommand\floor[1]{\lfloor#1\rfloor}
\newtheorem{theorem}{Theorem}[section]
\newtheorem{lemma}[theorem]{Lemma}
\newtheorem{proposition}[theorem]{Proposition}
\newtheorem{corollary}[theorem]{Corollary}
\newenvironment{proof}[1][Proof]{\begin{trivlist}
\item[\hskip \labelsep {\bfseries #1}]}{\end{trivlist}}
\newenvironment{definition}[1][Definition]{\begin{trivlist}
\item[\hskip \labelsep {\bfseries #1}]}{\end{trivlist}}
\newcommand{\qed}{\nobreak \ifvmode \relax \else
      \ifdim\lastskip<1.5em \hskip-\lastskip
      \hskip1.5em plus0em minus0.5em \fi \nobreak
      \vrule height0.75em width0.5em depth0.25em\fi}
\begin{document}
\date{}
\title{An Unoriented Variation on de Bruijn Sequences \thanks{This work was motivated by results in \cite{FP,FBP}, supported by NSF grant DMS-1022635 to PDS.
}
}

\author{Christie S. Burris\thanks{Department of Mathematics,
              Virginia Tech, csburris@vt.edu}         \and
        Francis C. Motta\thanks{Department of Mathematics,
              Duke University, motta@math.duke.edu} \and
        Patrick D. Shipman\thanks{Department of Mathematics,
             Colorado State University, shipman@math.colostate.edu}}

\maketitle

\begin{abstract}
For positive integers $k,n$, a de Bruijn sequence $B(k,n)$ is a finite sequence of elements drawn from $k$ characters whose subwords of length $n$ are exactly the $k^n$ words of length $n$ on $k$ characters. This paper introduces the unoriented de Bruijn sequence $uB(k,n)$, an analog to de Bruijn sequences, but for which the sequence is read both forwards and backwards to determine the set of subwords of length $n$. We show that nontrivial unoriented de Bruijn sequences of optimal length exist if and only if $k$ is two or odd and $n$ is less than or equal to 3. Unoriented de Bruijn sequences for any $k$, $n$ may be constructed from certain Eulerian paths in Eulerizations of unoriented de Bruijn graphs.
\end{abstract}


\section{Unoriented de Bruijn sequences}

For positive integers $k$ and $n$, what is the minimal length of a word over an alphabet of size $k$ which contains every length-$n$ word as a subword? The minimum possible length of such a word is $k^n+n-1$, as this length is required to see all $k^n$ such words without repetition.  What is less clear is that for each $k, n$ there are many words which achieve this lower bound. Such a word is called a de Bruijn sequence $B(k,n)$; its $k^n$ subwords of length $n$ are exactly the set of  $k^n$ words of length $n$ on $k$ characters.  An example of a de Bruijn sequence for $k=2$, $n=3$ is 0100011101 since it contains every binary word of length 3 -- 010, 100, 000, 001, 011, 111, 110, 101 -- exactly once. A de Bruijn sequence, $B(k,n)$, corresponds to an Eulerian circuit in a so-called de Bruijn graph, $Bg(k,n)$, whose $k^{n}$ directed edges are labeled by the length-$n$ $k$-ary words \cite{deB,deB2}. 

In this paper, we introduce a variation on the idea of a de Bruijn sequence, as exemplified by the sequence 00010111.  This sequence has each of the binary palindromes of length 3 as subwords, namely 000, 010, 101, and 111, but only one member of each of the pairs \{001, 100\} and \{011, 110\}.  Each non-palindrome of length 3 appears either forwards or backwards exactly once and each palindrome appears exactly once when the sequence is read forwards.  We call such a sequence an \textit{unoriented de Bruijn sequence of optimal length}.

 We refer to two words  $\bm{v}$ and $\bm{v'}$ from an alphabet of size $k$ as \textit{reflections} or \textit{reflected pairs} if $\bm{v}=v_1 v_2 \cdots v_{n-1} v_n$ and $\bm{v'}= v_n v_{n-1}\cdots v_2 v_1$.  If the $k$ symbols are $0,1,2,....k-1$, we denote a pair of reflections by $[\bm{v}]$, where $\bm{v}$ is the larger of the two integers written in a $k$-ary expansion.
 
\begin{definition}
An \textit{unoriented de Bruijn sequence} $uB(k,n)$ is a sequence of characters drawn from an alphabet $\Sigma_k$ of $k$ symbols that (i) contains as subwords a member from each of the length-$n$ reflections on $k$ symbols, and (ii) is of the shortest length amongst all such sequences satisfying property (i).
\end{definition} 

An unoriented de Bruijn sequence exists for any choice of $k$ and $n$ since there always exists a word which contains as subwords a member from each of the length-$n$ reflections on $k$ symbols. Thus there must be a word of minimal length which satisfies this requirement on subwords. The question becomes whether or not this minimal length is optimal, as it is for de Bruijn sequences. If so, then such an unoriented de Bruijn sequence would (unavoidably) see every $k$-ary palindrome of length $n$ twice, but every $k$-ary non-palindrome of length $n$ exactly once when read forwards and backwards. Equivalently, when read forward, such a word would contain as length-$n$ subwords exactly one member of each reflected pair. Thus the minimal possible length of any $uB(k,n)$ is equal to the sum of the number of reflections of length $n$ on $k$ symbols plus $n-1$.  To the number of palindromes, $k^{\lceil n/2 \rceil}$, one adds half of the number of non-palindromes, $(k^n - k^{\lceil n/2 \rceil})/2$ plus $n-1$. The smallest possible length of any sequence $uB(k,n)$ is therefore 
$$
l(k,n) = (k^n+k^{\lceil n/2 \rceil} + 2n - 2)/2.
$$

We refer to unoriented de Bruijn sequences $uB(1,n)$ and $uB(k,1)$ as trivial unoriented de Bruijn sequences and note that all trivial de Bruijn sequences have optimal length. The purpose of this paper is to determine for which pairs $(k,n)$ nontrivial unoriented de Bruijn sequences of optimal length exist and to construct unoriented de Bruijn sequences by way of Eulerian paths in Eulerizations of unoriented de Bruijn graphs. 

\section{Unoriented de Bruijn graphs}

The proof that de Bruijn sequences $B(k,n)$ exist for all $k,n$ begins by forming a \textit{$(k,n)$-de Bruijn graph}, $Bg(k,n)$.  $Bg(k,n)$ is a directed graph whose $k^{n-1}$ vertices are labeled by the words of length $n-1$ on $k$ symbols, and each of whose $k^n$ directed edges connect a subword to a potential consecutive subword in a sequence.  That is, there is an edge $\bm{v_1} \rightarrow \bm{v_2}$, if  $\bm{v_1} = v_1 v_2 \cdots v_{n-1}$ and $\bm{v_2} = v_2 \cdots v_{n-1} w$ for some $w \in \Sigma_k$.  Following an Eulerian circuit -- a path in the graph that visits each edge exactly once and starts and ends on the same vertex -- generates a de Bruijn sequence $B(k,n)$. 

Examples of de Bruijn graphs appear in Fig.~\ref{fig:dB}~(a-c).  In Fig.~\ref{fig:dB2_3}, the path 
$$00 \rightarrow 00 \rightarrow 01 \rightarrow 10 \rightarrow 01 \rightarrow 11 \rightarrow 11\rightarrow 10 \rightarrow 00$$
corresponds to the subwords 
$$000 \rightarrow 001 \rightarrow 010 \rightarrow 101 \rightarrow 011 \rightarrow 111 \rightarrow 110\rightarrow 100$$
that form the de Bruijn sequence 0001011100.  Since every vertex in $Bg(k,n)$ has even degree $2k$, there is an Eulerian circuit in any $Bg(k,n)$, and therefore a de Bruijn sequence $B(k,n)$ exists for any pair $(k,n)$. 


\begin{figure}[H]
\centering
      \subfloat[][$Bg(2,3)$] {
      \scalebox{0.7}{
         \begin{tikzpicture}[>=stealth',
         auto,
         node distance=1.25cm,
         bend angle=19,
         semithick,
         main node/.style={font=\sffamily\bfseries}]
         \tikzset{->-/.style={decoration={markings, mark=at position #1 with {\arrow[scale=1.5]{>}}},postaction={decorate}}}

  \node[main node] (1)                                 {00};
  \node[main node] (2) [below left=of 1,yshift=-.5cm]  {10};
  \node[main node] (3) [below =of 1,yshift=-2.25cm]    {11};
  \node[main node] (4) [below right=of 1,yshift=-.5cm] {01};

  \path[every node/.style={font=\sffamily\scriptsize}]
  
    (1) edge [->- =.5]           node  {001} (4)
        edge [loop above]        node  {000} (1)
    (2) edge [->- =.5]           node  {100} (1)
        edge [bend left,->- =.5] node  {101} (4)
    (3) edge [->- =.5]           node  {110} (2)
        edge [loop below]        node  {111} (3)
    (4) edge [->- =.5]           node  {011} (3)
        edge [bend left,->- =.5] node  {010} (2);
        \end{tikzpicture}
        \label{fig:dB2_3}
     }
   }
   \quad
      \subfloat[][$Bg(2,4)$] {
      \scalebox{0.70}{
         \begin{tikzpicture}[>=stealth',
         auto,
         node distance=.85cm,
         bend angle=20,
         semithick,
         main node/.style={font=\sffamily\bfseries}]
         \tikzset{->-/.style={decoration={markings, mark=at position #1 with {\arrow[scale=1.5]{>}}},postaction={decorate}}}

  \node[main node] (1)                    {000};
  \node[main node] (2) [below left=of 1]  {100};
  \node[main node] (3) [below right=of 2] {010};
  \node[main node] (4) [above right=of 3] {001};
  \node[main node] (5) [below =of 3]      {101};
  \node[main node] (6) [below left=of 5]  {110};
  \node[main node] (7) [below right=of 6] {111};
  \node[main node] (8) [above right=of 7] {011};

  \path[every node/.style={font=\sffamily\scriptsize}]
  
    (1) edge [bend left,->- =.5]   node  {0001} (4)
        edge [loop above]          node  {0000} (1)
        
    (2) edge [bend left,->- =.5]   node  {1000} (1)
        edge [->- =.5]             node  {1001} (4)
        
    (3) edge [->- =.5]             node  {0100} (2)
        edge [bend left,->- =.5]   node  {0101} (5)
        
    (4) edge [->- =.5]             node  {0010} (3)
        edge [bend left,->- =.5]   node  {0011} (8)
        
    (5) edge [->- =.5]             node  {1011} (8)
        edge [bend left,->- =.5]   node  {1010} (3)
        
    (6) edge [bend left,->- =.5]   node  {1100} (2)
        edge [->- =.5]             node   {1101} (5)
        
    (7) edge [bend left,->- =.5]   node  {1110} (6)
        edge [loop below]          node  {1111} (8)
        
    (8) edge [bend left,->- =.5]   node  {0111} (7)
        edge [->- =.5]             node  {0110} (6);
        \end{tikzpicture}
        \label{fig:dB2_4}
      }
   }
   \quad
      \subfloat [$Bg(3,3)$] {
      \scalebox{0.48}{
         \begin{tikzpicture}[>=stealth',
         auto,
         node distance=1cm,
         bend angle=20,
         semithick,
         main node/.style={font=\sffamily\bfseries}]
         \tikzset{->-/.style={decoration={markings, mark=at position #1 with {\arrow[scale=1.5]{>}}},postaction={decorate}}}

  \node[main node] (00)                                                {00};
  \node[main node] (20) [below left=of 00,xshift=-1.75cm,yshift=-.6cm] {20};
  \node[main node] (01) [below right=of 00,xshift=1.75cm,yshift=-.6cm] {01};
  \node[main node] (02) [below right=of 20, xshift=.5cm]               {02};
  \node[main node] (10) [below left=of 01, xshift=-.5cm]               {10};
  \node[main node] (21) [below =of 00,yshift=-3.6cm]                   {21};
  \node[main node] (22) [below left=of 21,xshift=-1.75cm,yshift=.6cm]  {22};
  \node[main node] (12) [below =of 21,yshift=-.7cm]                    {12};
  \node[main node] (11) [below right=of 21,xshift=1.75cm, yshift=.6cm] {11};

  \path[every node/.style={font=\sffamily\small}]
  
    (00) edge [in=80,out=110, loop]  node [left] {000} (00)
         edge [->- =.5]              node        {001} (01)
         edge [->- =.5]              node [left] {002} (02)
        
    (20) edge [->- =.5]                             node         {200} (00)
         edge [bend right, ->- =.5]                 node [below] {202} (02)
         edge [bend left=90,looseness=1.5,->- =.47] node         {201} (01)
        
    (01) edge [->- =.5]                             node         {011} (11)
         edge [bend right,->- =.75]                 node [above] {010} (10)
         edge [bend left=90,looseness=1.75,->- =.5] node         {012} (12)        
        
    (02) edge [bend right,->- =.5]   node [above] {020} (20)
         edge [->- =.7]              node [left]  {021} (21)
         edge [->- =.7]              node [right] {022} (22)
        
    (10) edge [->- =.5]              node [above] {102} (02)
         edge [bend right,->- =.8]   node [below] {101} (01)
         edge [->- =.5]              node [right] {100} (00)
        
    (21) edge [bend right,->- =.5]   node [left]  {212} (12)
         edge [->- =.5]              node [right] {210} (10)
         edge [->- =.5]              node [below] {211} (11)
        
    (22) edge [->- =.6]              node  [below] {221} (21)
         edge [->- =.5]              node          {220} (20)
         edge [in=210,out=240,loop]  node [below]  {222} (22)
        
    (12) edge [bend right,->-=.5]                   node [right] {121} (21)
         edge [bend left=90,looseness=1.75,->- =.5] node {120} (20)
         edge [->- =.5]                             node  {122} (22)
        
    (11) edge [->- =.5]              node [left] {110} (10)
         edge [->- =.5]              node        {112} (12)
         edge [in=-30,out=-60,loop]  node [below]{111} (11);
        \end{tikzpicture}
        \label{fig:dB3_3}
      }
   } 

\hspace{-1.5cm}
     \subfloat [$uBg(2,3)$] {
      \scalebox{0.9}{
      \begin{tikzpicture}[>=stealth',
      auto,
      node distance=1cm,
      bend angle=20,
      semithick,
      main node/.style={font=\sffamily\bfseries}, 
      every loop/.style={}]

  \node[main node] (1)                {[00]};
  \node[main node] (2) [below =of 1]  {[10]};
  \node[main node] (3) [below =of 2]  {[11]};

  \path[every node/.style={font=\sffamily\footnotesize}]
  
    (1) edge               node  {[100]} (2)
        edge [loop above]  node  {[000]} (1)
    (2) edge               node  {[110]} (3)
        edge [loop left ]  node  {[010]} (2)
        edge [loop right]  node  {[101]} (2)
    (3) edge [loop below]  node  {[111]} (3);
      \end{tikzpicture}
      \label{fig:udB2_3}
     }
   } 
   \hspace{.5cm}
      \subfloat [$uBg(2,4)$] {
      \scalebox{0.8}{
      \begin{tikzpicture}[>=stealth',
      auto,
      node distance=.75cm,
      semithick,
      main node/.style={font=\sffamily\bfseries},
      every loop/.style={}] 

  \node[main node] (000)                 {[000]};
  \node[main node] (100) [right =of 000] {[100]};
  \node[main node] (010) [below =of 100] {[010]};
  \node[main node] (101) [below =of 010] {[101]};
  \node[main node] (110) [below =of 101] {[110]};
  \node[main node] (111) [left =of 110]  {[111]};

  \path[every node/.style={font=\sffamily\scriptsize}]
  
    (000) edge               node         {[1000]} (100)
          edge [loop above]  node         {[0000]} (000)
    (100) edge               node [right] {[0100]} (010)
          edge [bend right]  node [left]  {[1100]} (110)
          edge [loop right]  node         {[1001]} (100)
    (010) edge               node [right] {[1010]} (101)
    (101) edge               node [right] {[1101]} (110)
    (110) edge               node         {[1110]} (111)
          edge [loop right]  node         {[0110]} (110)
    (111) edge [loop below]  node         {[1111]} (11);
      \end{tikzpicture}
      \label{fig:udB2_4}
      }
   }
   \hspace{1cm}
      \subfloat [$uBg(3,3)$] {
      \scalebox{0.7}{
      \begin{tikzpicture}[>=stealth',
      auto,
      node distance=1.25cm,
      semithick,
      main node/.style={font=\sffamily\bfseries},
      every loop/.style={}] 

  \node[main node] (00)                                  {[00]};
  \node[main node] (20) [below left=of 00,yshift=-.26cm] {[20]};
  \node[main node] (01) [below right=of 00,yshift=-.26cm]{[10]};
  \node[main node] (21) [below =of 00,yshift=-1.75cm]     {[21]};
  \node[main node] (22) [left=of 21,xshift=-1.6cm]       {[22]};
  \node[main node] (11) [right=of 21,xshift=1.6cm]       {[11]};

  \path[every node/.style={font=\sffamily\scriptsize}]
  
    (00) edge               node  [left]{} (20)
         edge [loop above]  node  {} (00)
    (20) edge               node  {} (01)
         edge               node  [left]{} (22)
         edge [loop above]  node  [left]{} (20)
         edge [loop below]  node  [below]{} (20)
    (01) edge               node  [right]{} (00)
         edge               node  {} (11)
         edge [loop above]  node  [right]{} (01)
         edge [loop below]  node  {} (01)
    (21) edge               node  {} (20)
         edge               node  {} (01)
         edge [loop above]  node  [left]{} (21)
         edge [loop below]  node  {} (21)
    (22) edge               node  [below]{} (21)
         edge [loop below]  node  {} (22)
    (11) edge [loop below]  node  {} (11)
         edge               node  {} (21);
      \end{tikzpicture}
      \label{fig:udB3_3}
      }
   }
   \caption{(a-c) de Bruijn graphs $Bg(k,n)$. Vertices represent length-$(n-1)$ words and edges represent length-$n$ words. (d-f) unoriented de Bruijn graphs $uBg(k,n)$. Vertices represent length-$(n-1)$ reflected pairs, and edges represent length-$n$ reflected pairs.}
   \label{fig:dB}
\end{figure}

An unoriented de Bruijn sequence of optimal length would be formed by following a path in a de Bruijn graph that traverses exactly one of the two edges corresponding to each reflected pair.  The main result of this paper is that, if neither $k$ nor $n$ equals 1, such paths only exist when both $k$ and $n$ are no greater than $3$  (Thm.~\ref{flip}).  Towards this result, we construct unoriented de Bruijn graphs $uBg(k,n)$ whose vertices are in 1-1 correspondence with the length-$(n-1)$ reflections on $k$ symbols, and whose edges are in 1-1 correspondence with the reflections of length $n$ on $k$ symbols.  To illustrate this construction, consider the de Bruijn graph $Bg(2,4)$ in Fig.~\ref{fig:dB2_4}.  The edge $001 \rightarrow 011$ represents the word ${\bm e} = 0011$ whose reflection ${\bm e'} = 1100$ is represented by the edge $110 \rightarrow 100$.  This is, in general, the case.

\begin{lemma}
\label{undir}
For every edge $\bm{v} \rightarrow \bm{w}$ in the graph $Bg(k,n)$, there exists an edge $\bm{w'} \rightarrow \bm{v'}$ in $Bg(k,n)$. Moreover, the length-$n$ words represented by these edges form a reflected pair. 
\end{lemma}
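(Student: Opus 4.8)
The plan is to verify the claim by writing out the edge labels explicitly and reversing them, checking that the adjacency rule of $Bg(k,n)$ is respected in the reversed traversal direction. First I would fix notation: an edge $\bm{v} \rightarrow \bm{w}$ has as its tail a length-$(n-1)$ vertex $\bm{v} = a_1 a_2 \cdots a_{n-1}$, and by the defining adjacency rule its head must have the form $\bm{w} = a_2 a_3 \cdots a_{n-1} b$ for some $b \in \Sigma_k$. The length-$n$ word represented by this edge is therefore $\bm{e} = a_1 a_2 \cdots a_{n-1} b$.

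Next I would form the three relevant reflections directly from the definition. Reversing the two vertices gives $\bm{v'} = a_{n-1} a_{n-2} \cdots a_2 a_1$ and $\bm{w'} = b\, a_{n-1} \cdots a_3 a_2$, while reversing the edge label gives $\bm{e'} = b\, a_{n-1} \cdots a_2 a_1$. The key observation is that these reversed vertices again satisfy the adjacency rule, but with the orientation flipped: deleting the first symbol $b$ of $\bm{w'}$ leaves $a_{n-1} \cdots a_2$, which is precisely the length-$(n-2)$ prefix of $\bm{v'}$, and appending the symbol $a_1$ recovers $\bm{v'}$ exactly. Hence $\bm{w'} \rightarrow \bm{v'}$ is a legitimate edge of $Bg(k,n)$, which proves the first assertion.

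Finally, I would read off the label of this new edge. Its tail is $\bm{w'} = b\, a_{n-1} \cdots a_2$ and the appended symbol is $a_1$, so the length-$n$ word it represents is $b\, a_{n-1} \cdots a_2 a_1 = \bm{e'}$, the reflection of $\bm{e}$; this gives the ``moreover'' clause. I do not expect any serious obstacle, since the argument is a direct symbol-by-symbol check. The one point demanding care is the orientation: reversing a word simultaneously reverses the traversal direction, so the reflected edge runs \emph{from} $\bm{w'}$ \emph{to} $\bm{v'}$ rather than the other way around, and keeping this bookkeeping straight is what makes the stated endpoints come out correctly.
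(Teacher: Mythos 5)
Your proposal is correct and follows essentially the same argument as the paper's proof: both write the vertices out symbol by symbol, check that the suffix of $\bm{w'}$ matches the prefix of $\bm{v'}$ so that the adjacency rule gives the edge $\bm{w'} \rightarrow \bm{v'}$, and then read off that its label $b\,a_{n-1}\cdots a_2 a_1$ is the reflection of $\bm{e}$. Your explicit remark about the orientation reversal is a nice touch but does not change the substance.
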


\begin{proof}
Let $\bm{v}=v_1\cdots v_{n-1}$.  Consider any  $\bm{w}$ such that $\bm{v} \rightarrow \bm{w}$. Then, $\bm{w}=v_2\cdots v_{n-1}w$ and $\bm{w'}=w v_{n-1}\cdots v_2$ for some $w \in \Sigma_k$. Since the last $n-2$ elements of $\bm{w'}$, $v_{n-1}\cdots v_2$, agree with the first $n-2$ elements of $\bm{v'}=v_{n-1}\cdots v_1$, there is an edge $\bm{w'} \rightarrow \bm{v'}$.  The edge $\bm{v} \rightarrow \bm{w}$ represents the word $\bm{e} = v_1 \cdots v_{n-1} w$, and the edge $\bm{w'} \rightarrow \bm{v'}$ represents the reflection $\bm{e'} = w v_{n-1} \cdots v_1$.  

\qed
\end{proof}

Lemma \ref{undir} allows for the definition of a graph whose vertices represent the reflected pairs of words of length $n-1$ and whose undirected edges, $\bm{[v]} \leftrightarrow \bm{[w]}$, represent the two directed edges $\bm{v} \rightarrow \bm{w}$ and $\bm{w'} \rightarrow \bm{v'}$.  Essential to the proof of Lemma \ref{undir} and the definition of such a graph is the consideration of the first and last $n-2$ characters of a word of length $n-1$.  We define the \textit{prefix} (\textit{suffix}) of a word of length $m$ to be the first (last) $m-1$ letters of that word.  

\begin{definition}
For positive integers $k,n$, an \textit{unoriented de Bruijn graph} $uBg(k,n)$ contains $(k^n+k^{\lceil n/2 \rceil})/2$ vertices labeled by the equivalence classes $\bm{[v]}$ of $k$-ary words of length $n-1$.  Edges of $uBg(k,n)$ are labeled by distinct equivalence classes of words of length $n$.  An undirected edge labeled $[\bm{e}]$ connects the vertices labeled by the equivalence classes of the prefix and suffix of $\bm{e}$, respectively.  Thus, there is an (undirected) edge $[\bm{v}] \leftrightarrow [\bm{w}]$ if $\bm{v} \rightarrow \bm{w}$ or $\bm{v} \rightarrow \bm{w'}$ is a (directed) edge in the de Bruijn graph $Bg(k,n)$. 
\end{definition}

\noindent  Examples of unoriented de Bruijn graphs are shown in Fig.~\ref{fig:dB}~(d-f).  If there exists an edge $\bm{v} \rightarrow \bm{v'}$ in the graph $Bg(k,n)$, then there exists a undirected loop $\hookrightarrow \bm{[v]}$ in the graph $uBg(k,n)$. For example, the edges $10 \rightarrow 01$, and $01 \rightarrow 10$ in $Bg(2,3)$ correspond to the two loops $\hookrightarrow [10]$ in the graph $uBg(2,3)$ (Fig.~\ref{fig:udB2_3}).  These loops represent the classes [101] and [010]. \\

Our definition of an unoriented de Bruijn graph differs from that of the undirected de Bruijn graph defined by Esfahanian and Hakimi \cite{eh} and investigated by various authors \cite{kf,lz}.  Their undirected de Bruijn graphs are constructed by replacing directed edges in de Bruijn graphs by undirected edges and then removing all loops and multiple edges.

\section{Generating unoriented de Bruijn sequences}

\noindent As an unoriented de Bruijn graph $uBg(k,n)$ is traversed, the direction taken along each edge determines the sequence of subwords in the corresponding unoriented de Bruijn sequence. In Fig.~\ref{fig:udB2_3}, if the undirected edge from $[00]$ to $[10]$ is traversed $[00]\rightarrow [10]$, the corresponding subword is 001. If the same edge is traversed $[10]\rightarrow [00]$, the subword is 100. Continuing this example, the path 
$$[00] \hookrightarrow [00] \rightarrow [10] \hookrightarrow [10]\hookrightarrow [10] \rightarrow [11] \hookrightarrow [11]$$
corresponds to the subwords 
$$000 \rightarrow 001 \rightarrow 010 \rightarrow 101 \rightarrow 011 \rightarrow 111,$$
and the $(2,3)$-unoriented de Bruijn sequence is 00010111. However, not every path corresponds to a valid sequence. For instance, in $uBg(2,3)$ the path 
$$[00] \hookrightarrow [00] \rightarrow [10] \hookrightarrow [10] \rightarrow [11]$$
is a valid path but does not correspond to a valid sequence since the subword 011 cannot follow 010.

For any vertex ${[\bm{v}]}$ in $uBg(k,n)$, it is necessary to distinguish the edges incident to ${[ \bm{v}]}$ that correspond to edges ${\bm v} \rightarrow {\bm w}$ and ${\bm w'} \rightarrow {\bm v'}$ in $Bg(k,n)$ from those that correspond to edges ${\bm v'} \rightarrow {\bm w}$ and ${\bm w'} \rightarrow {\bm v}$ in $Bg(k,n)$. 

\begin{definition}
The incidence of an edge $[\bm{e}]$ to a vertex $[\bm{v}]$ is said to be of \textit{Type I (II)} if either $\bm{v}$ is the prefix (suffix) of $\bm{e}$, or $\bm{v'}$ is the suffix (prefix) of $\bm{e}$.  We will also say that an edge $[\bm{e}]$ is of \textit{Type I (II) relative to $[\bm{v}]$} if the incidence of $[\bm{e}]$ to $[\bm{v}]$ is of Type I (II).
\end{definition} 

Examples of unoriented de Bruijn graphs in which edges are labeled according to their Type relative to each vertex, shown in Fig.~\ref{fig:udB}, illustrate that an edge is not necessarily of the same type relative to the two vertices to which it is incident.  For example, the edge [110] in $uBg(2,3)$ is of Type 1 relative to [11] because 11 is the prefix of 110, but is of Type II relative to [10] because 10 is the suffix of 110.  Also note if $\bm{v} = \bm{v'}$, then all edges incident to $\bm{v}$ are of both types relative to $\bm{v}$ and a loop contributes two to the count of incidences of Type I or II to any vertex.  For example, the vertex $[10]$ in the graph $uBg(2,3)$ shown in Fig.~\ref{fig:udB}~(a) has three incidences of Type I and three of Type II.  

In order to generate an unoriented de Bruijn sequence from $uBg(k,n)$, one must traverse the graph by entering each vertex $[{\bm v}]$ on an edge of one type relative to $[{\bm v}]$ and leaving on an edge of the other type relative to $[{\bm v}]$.  There is effectively no restriction on which edges can be traversed from a vertex $\bm{v}$ such that $\bm{v} = \bm{v'}$ since all incidences to these vertices are of both types.  We define an \textit{alternating Eulerian path} in an unoriented de Bruijn graph to be an Eulerian path that satisfies this criterion: 

\begin{definition}
\label{aEp}
An \textit{alternating Eulerian path} in an unoriented de Bruijn graph is an Eulerian path such that if it enters a vertex on an edge of Type I (II) relative to that vertex, then it leaves that vertex on an edge of Type II (I) relative to that vertex.
\end{definition} 

The relationship between unoriented de Bruijn sequences and graphs is thus more complex than that between original de Bruijn sequences and graphs. The existence of an unoriented de Bruijn sequence $uB(k,n)$ of optimal length implies the existence of an Eulerian path in the unoriented de Bruijn graph $uBg(k,n)$ but the converse statement is more restrictive: the existence of an alternating Eulerian path in $uBg(k,n)$ implies the existence of a sequence $uB(k,n)$ of optimal length.

Algorithms such as Fleury's algorithm \cite{Fleury} and Hierholzer's algorithm \cite{Hierholzer} are guaranteed to produce an Eulerian circuit in a graph with no vertices of odd degree or an Eulerian path in a graph with exactly two vertices of odd degree.  These algorithms may be readily adapted to produce alternating circuits or paths in graphs with zero or two odd-degree vertices that also satisfy additional conditions, which are provided in Proposition~\ref{alt_conditions}.

\newpage


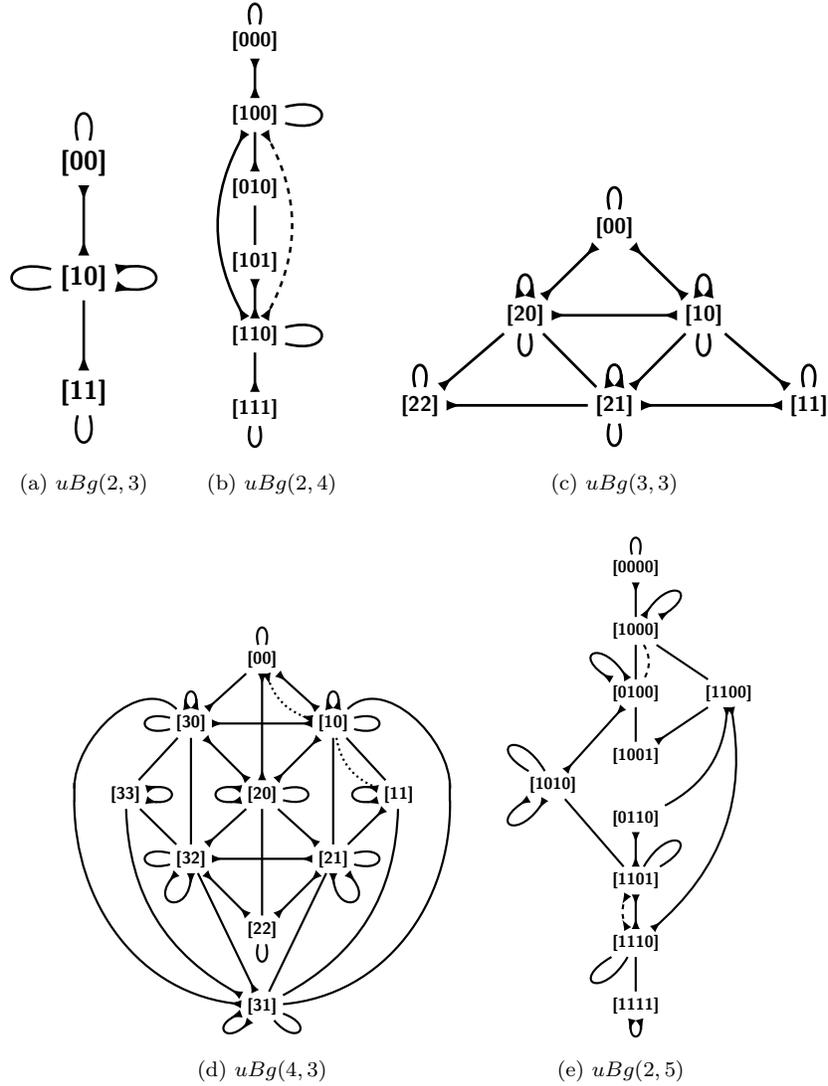
\begin{figure}[H]
\centering
      \subfloat [$uBg(2,3)$] {
      \resizebox{0.175\textwidth}{!}{
      \begin{tikzpicture}[>=stealth',
      auto,
      node distance=.95cm,
      bend angle=20,
      semithick,
      main node/.style={font=\sffamily\bfseries},every loop/.style={line width=1pt}, 
      red/.style={line width=1pt,color=black,>=latex,>-<}, 
      blue/.style={line width=1pt, color=black}]

  \node[main node] (1)                {[00]};
  \node[main node] (2) [below =of 1]  {[10]};
  \node[main node] (3) [below =of 2]  {[11]};

  \path[every node/.style={font=\sffamily\scriptsize}]
  
    (1) edge [red]             node  {} (2)
        edge [blue,loop above]  node  {} (1)
        
    (2) edge [blue, >=latex,-<]node  {} (3)
        edge [red,loop right]  node  {} (2)
        edge [blue,loop left]  node  {} (2)
        
    (3) edge [blue,loop below]  node  {} (3);
      \end{tikzpicture}
      \label{fig:color_udB2_3}
     }
   }
      \subfloat [$uBg(2,4)$] {
      \resizebox{0.125\textwidth}{!}{
      \begin{tikzpicture}[>=stealth',
      auto,
      node distance=.7cm,
      semithick,
      main node/.style={font=\sffamily\bfseries}, every loop/.style={line width=1.3pt,color=black},
      red/.style={line width=1.3pt,color=black,>=latex,>-<}, 
      blue/.style={line width=1.3pt, color=black}] 

  \node[main node] (1)               {[000]};
  \node[main node] (2) [below =of 1] {[100]};
  \node[main node] (3) [below =of 2] {[010]};
  \node[main node] (5) [below =of 3] {[101]};
  \node[main node] (6) [below =of 5] {[110]};
  \node[main node] (7) [below =of 6] {[111]};

  \path[every node/.style={font=\sffamily\scriptsize}]
  
    (1) edge [red]                    node [right] {} (2)
        edge [blue, loop above]        node         {} (1)
        
    (2) edge [blue,>=latex,-<]        node [left]  {} (3)
        edge [red,bend right]         node         {} (6)
        edge [red,bend left, dashed]  node         {} (6)
        edge [loop right]             node         {} (2)
        
    (3) edge [blue]                    node [right] {} (5)
    
    (5) edge [red]                    node [right] {} (6)
    
    (6) edge  [blue,>=latex,-<]       node         {} (7)
        edge [loop right]             node         {} (6)
        
    (7) edge [blue, loop below]        node         {} (8);
      \end{tikzpicture}
      \label{fig:color_udB2_4}
      }
   }
   \quad
      \subfloat [$uBg(3,3)$] {
      \resizebox{0.4\textwidth}{!}{
      \begin{tikzpicture}[>=stealth',
      auto,
      node distance=.85cm,
      semithick,
      main node/.style={font=\sffamily\bfseries},every loop/.style={line width=1.3pt}, 
      red/.style={line width=1.3pt,color=black,>=latex,>-<}, 
      blue/.style={line width=1.3pt, color=black}, 
      br/.style={color=black,line width=1.3pt,>=latex,>-},  
      brr/.style={color=black,line width=1.3pt,>=latex,-<}]

  \node[main node] (00)                                {[00]};
  \node[main node] (20) [below left=of 00,yshift=-.25cm] {[20]};
  \node[main node] (01) [below right=of 00,yshift=-.25cm] {[10]};
  \node[main node] (21) [below =of 00,yshift=-1.5cm]  {[21]};
  \node[main node] (22) [left=of 21,xshift=-1.5cm] {[22]};
  \node[main node] (11) [right=of 21,xshift=1.5cm]   {[11]};

  \path[every node/.style={font=\sffamily\scriptsize}]
  
    (00) edge [red]              node  {} (20)
         edge [blue,loop above]   node  {} (00)
    (20) edge [red]              node  {} (01)
         edge [brr]              node  {} (22)
         edge [red,loop above]   node  {} (20)
         edge [blue,loop below]  node  {} (20)
    (01) edge [red]              node  {} (00)
         edge [brr]              node  {} (11)
         edge [red,loop above]   node  {} (01)
         edge [blue,loop below]  node  {} (01)
    (21) edge [blue]             node  {} (20)
         edge [br]               node  {} (01)
         edge [red,loop above]   node  {} (21)
         edge [blue,loop below]  node  {} (21)
    (22) edge [br]               node  {} (21)
         edge [blue,loop above]   node  {} (22)
    (11) edge [blue,loop above]   node  {} (11)
         edge [red]              node  {} (21);
      \end{tikzpicture}
      \label{fig:color_udB3_3}
      }
   }
   \quad
      \subfloat [$uBg(4,3)$] {
      \resizebox{0.35\textwidth}{!}{
      \begin{tikzpicture}[>=stealth', 
      auto,
      node distance=.5cm,
      semithick,
      main node/.style={font=\sffamily\bfseries},
      every loop/.style={line width=1.3pt}, 
      red/.style={line width=1.3pt,color=black,>=latex,>-<}, 
      blue/.style={line width=1.3pt, color=black}, 
      br/.style={color=black,line width=1.3pt,>=latex,-<},  
      brr/.style={color=black,line width=1.3pt,>=latex,>-}] 
      
  \draw[line width=1.35pt] (3.96,-2.61) -- (3.96,-3.0);
  \draw[line width=1.35pt] (-3.96,-2.61) -- (-3.96,-3.0);
  
  \node[main node] (00)                                            {[00]};%
  \node[main node] (30) [below =of 00,yshift=-.25cm,xshift=-1.5cm] {[30]};%
  \node[main node] (01) [below =of 00,yshift=-.25cm,xshift=1.5cm]  {[10]};%
  \node[main node] (20) [below =of 00,yshift=-1.75cm]              {[20]};%
  \node[main node] (33) [left=of 20,xshift=-1.5cm]                 {[33]};%
  \node[main node] (33i) [left=of 33, xshift=-.01cm]               {};
  \node[main node] (11) [right=of 20,xshift=1.5cm]                 {[11]};%
  \node[main node] (11i) [right=of 11,xshift=0.01cm]               {};
  \node[main node] (32) [below=of 20,yshift=-.25cm,xshift=-1.5cm]  {[32]};%
  \node[main node] (21) [below =of 20,yshift=-.25cm,xshift=1.5cm]  {[21]};%
  \node[main node] (22) [below =of 20,yshift=-1.75cm]              {[22]};%
  \node[main node] (31) [below =of 22,yshift=-.5cm]                {[31]};%

  \path[every node/.style={font=\sffamily\scriptsize}]
  
    (00) edge [red]                   node  {} (20)
         edge [blue,loop above]        node  {} (00)
         edge [br]                   node  {} (30)
         
    (30) edge [red]                   node  {} (01)
         edge [red,loop above]        node  {} (30)
         edge [black,loop left]       node  {} (30)
         edge [blue]                  node  {} (32)
         edge [blue,in=90,out=135, shorten >=0pt]  node {} (33i)  
         
    (01) edge [red]                   node  {} (00)
         edge [red,dotted,bend left]  node  {} (00)
         edge [blue]                  node  {} (11)
         edge [blue,dotted,bend right]node  {} (11)
         edge [red,loop above]        node  {} (01)
         edge [blue,loop right]       node  {} (01)
         edge [red]                   node  {} (20)
         edge [blue]                  node  {} (21)
         edge [blue,in=89,out=45]     node  {} (11i) 
         
    (20) edge [br]                    node  {} (21)
         edge [br]                    node  {} (32)
         edge [blue]                  node  {} (22)
         edge [red]                   node  {} (30)
         edge [red,loop left]         node  {} (20)
         edge [blue,loop right]       node  {} (20)
         
    (33) edge [blue]                  node {} (30)
         edge [red,loop right]       node {} (33)
         edge [br,bend right]         node {} (31)
         edge [blue]                  node {} (32)
         
    (32) edge [red]                   node {} (22)
         edge [blue,loop left]        node {} (32)
         edge [red,in=225,out=270,loop] node {} (32)
         edge [red]                   node {} (21)
         edge [br]                    node {} (31)
         
    (21) edge [br]                         node {} (11)
         edge [red,in=270,out=315,loop]      node {} (21)
         edge [blue,loop right]              node {} (21)
         edge [blue]                         node {} (31)
         
    (22) edge [red]                    node  {} (21)
         edge [blue,loop below]       node  {} (22)
         
    (31) edge [red,in=230,out=200,loop]      node {} (31)
         edge [blue,in=310,out=340,loop]     node {} (31)
         edge [brr,in=270,out=175]           node {} (33i)  
         edge [blue,in=271,out=5]            node {} (11i)  
    (11) edge [red,loop left]               node {} (11)
         edge [blue, bend left]              node {} (31);

      \end{tikzpicture}
      \label{fig:color_udB4_3}
      }
   }   
 \subfloat [$uBg(2,5)$] {
      \resizebox{0.25\textwidth}{!}{
      \begin{tikzpicture}[>=stealth',   
      auto,
      node distance=.75cm,
      semithick,
      main node/.style={font=\sffamily\bfseries},
      every loop/.style={line width=1.3pt}, 
      red/.style={line width=1.3pt, color=black,>=latex,>-<}, 
      blue/.style={line width=1.3pt, color=black}] 

  \node[main node] (0000)                  {[0000]};
  \node[main node] (1000) [below =of 0000] {[1000]};
  \node[main node] (0100) [below =of 1000] {[0100]};
  \node[main node] (1001) [below =of 0100] {[1001]};
  \node[main node] (0110) [below =of 1001] {[0110]};
  \node[main node] (1101) [below =of 0110] {[1101]};
  \node[main node] (1110) [below =of 1101] {[1110]};
  \node[main node] (1111) [below =of 1110] {[1111]};
  \node[main node] (1010) [below left =of 1001, yshift=.5cm] {[1010]};
  \node[main node] (1100) [right =of 0100] {[1100]};

  \path[every node/.style={font=\sffamily\scriptsize}]
  
    (0000) edge [blue,loop above]           node {} (0000)
    (1000) edge [blue]                     node {} (1100)
           edge [blue]                     node {} (0100)
           edge [blue,dashed,bend left]    node {} (0100)
           edge [red,in=30,out=60,loop]    node {} (1000)
           edge [blue, >=latex,-<]		node {} (0000)
    (0100) edge [blue]                     node {} (1001)
           edge [red]                      node {} (1010)
           edge [red,in=120,out=150,loop]  node {} (0100)
    (1001) edge [blue,>=latex,>-]                     node {} (1100)
    (0110) edge [blue,bend right,>=latex,-<]          node {} (1100)
           edge [red]                      node {} (1101)
    (1101) edge [blue]                     node {} (1010)
           edge [blue,in=30,out=60,loop]   node {} (1101)
           edge [red]                      node {} (1110)
           edge [red,dashed,bend right]    node {} (1110)
    (1110) edge [blue,in=210,out=240,loop] node {} (1110)
           edge [blue]                     node {} (1111)
    (1111) edge [red,loop below]          node {} (1111)
    (1010) edge [red,in=210,out=240,loop]  node {} (1010)
           edge [blue,in=120,out=150,loop] node {} (1010)
    (1100) edge [red,bend left,>=latex,>-<] node {} (1110);
      \end{tikzpicture}
      \label{fig:color_udB2_5}
      }
   }
   \caption{Unoriented de Bruijn graphs $uBg(k,n)$.   An edge of Type I  relative to a vertex is marked by a triangle, and an edge of Type II is unmarked.  Although edges incident to palindromic reflected pairs can be of either type, we show here choices which ensure that the graph supports an alternating Eulerian path. The dashed edges in subfigures (b,d,e) are duplicates of edges in the graphs that have been inserted in such a way that each of the graphs contains an alternating Eulerian path that generates an unoriented de Bruijn sequence. }
   \label{fig:udB}
\end{figure}

\begin{proposition}
\label{alt_conditions}
An unoriented de Bruijn sequence $uB(k,n)$ of optimal length exists if and only if the following conditions on $uBg(k,n)$ are satisfied:

	\begin{enumerate}
	\item The number of odd-degree vertices is zero or two.
	\item The numbers of Type-I and Type-II incidences 
	relative to every even-degree vertex representing a non-palindromic reflected pair are equal.
	\item The numbers of Type-I and Type-II incidences relative to every 
	odd-degree vertex representing a non-palindromic reflected pair differ by one.
	\end{enumerate}
	
\end{proposition}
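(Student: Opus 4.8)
The plan is to prove the equivalence by first reducing the existence of an optimal $uB(k,n)$ to the existence of an \emph{alternating Eulerian path} in $uBg(k,n)$, and then characterizing the latter entirely through the degree and type counts in conditions (1)--(3); throughout I use that $uBg(k,n)$ is connected, which it inherits from the strong connectedness of $Bg(k,n)$.

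For the reduction, the implication that an alternating Eulerian path produces an optimal sequence is already recorded above, so only the forward direction needs checking. If $s$ is an optimal sequence, its length-$n$ subwords $W_1,\dots,W_M$ trace a walk $[W_1],\dots,[W_M]$ that, by optimality, uses every edge of $uBg(k,n)$ exactly once. At the vertex shared by $[W_i]$ and $[W_{i+1}]$ the word $\bm u=\mathrm{suffix}(W_i)=\mathrm{prefix}(W_{i+1})$ represents that vertex, so $[W_i]$ is incident there through its suffix and $[W_{i+1}]$ through its prefix; inspecting the definition of type, these two incidences are of opposite type (and this is insensitive to which of $\bm u,\bm u'$ is the canonical representative, since the two clauses of each type simply exchange roles). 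Thus the walk alternates at every interior vertex, so it is an alternating Eulerian path, and it remains to show such a path exists exactly when (1)--(3) hold.

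Necessity is a counting argument. An Eulerian path forces $0$ or $2$ odd-degree vertices, which is (1). At a non-palindromic vertex $[\bm v]$ of even degree $2m$, its $2m$ incidences group into $m$ enter/leave pairs, each mixing one Type~I and one Type~II incidence by the alternating property, so the two counts equal $m$ --- condition (2). At a non-palindromic vertex of odd degree $2m+1$ --- necessarily an endpoint of the open path --- the $m$ interior passages again split evenly while the lone terminal incidence lies in a single type, so the counts differ by one --- condition (3). Palindromic vertices are exempt because every incidence there is of both types, which is exactly why (2)--(3) are restricted to non-palindromic vertices.

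The substantive step, and the expected obstacle, is sufficiency: from (1)--(3) I must construct an alternating Eulerian path, for which I would adapt Hierholzer's algorithm through a transition system. Conditions (2)--(3) allow the incidences at each non-palindromic vertex to be paired so that every pair couples a Type~I with a Type~II incidence, with exactly one incidence left unpaired at each of the (at most two) odd-degree vertices; at palindromic vertices any pairing is admissible. Reading off the trails determined by this pairing decomposes the edges into alternating closed trails, plus one alternating open trail when two odd vertices are present. It then remains to fuse these trails into one: so long as more than one trail survives, connectedness guarantees a vertex incident to edges of two different trails, and there I re-pair the two local transitions by exchanging their Type~II partners, which keeps both resulting pairs of mixed type and merges the trails while preserving alternation. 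Iterating produces a single alternating Eulerian path --- a circuit when no odd vertices are present. The delicate point is precisely this merge: confirming that a shared vertex always admits an alternation-preserving re-pairing, which is immediate at palindromic vertices and holds at non-palindromic ones because the two trails cross there through mixed-type transitions that can be swapped without ever forming a same-type pair.
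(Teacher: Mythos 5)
Your proposal is correct in substance, and it is worth noting that it does more than the paper does: the paper explicitly declines to prove Proposition~\ref{alt_conditions} (``in lieu of a proof\dots'') and instead sketches a modified Hierholzer algorithm for the sufficiency direction, in which the two odd-degree vertices are joined by an auxiliary edge, types at palindromic vertices are designated so as to balance every vertex, and sub-circuits are spliced into a growing alternating circuit, with the ad hoc fix that if a sub-circuit returns to its base vertex $[\bm{w}]$ on an edge of the wrong type one ``continues on'' until it returns on the opposite type. Your sufficiency argument takes a genuinely different route: a transition-system (Kotzig-style) construction in which conditions 2 and 3 are used \emph{statically} to pre-pair Type-I with Type-II incidences at each non-palindromic vertex (leaving one unpaired incidence at each of the at most two odd vertices), the edge set then decomposes into alternating trails, and trails are merged by swapping the two transitions at a vertex met by two distinct trails --- a swap that manifestly preserves the mixed-type property of both pairs. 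This buys a cleaner correctness argument than the paper's sketch, since closed trails arising from a transition system alternate at \emph{every} passage (including the wrap-around), so the merge step needs no case analysis about how a spliced circuit happens to close up; by contrast, the paper's ``continue until you return on the opposite type'' step is plausible but its termination and validity are never argued. Your necessity direction likewise has no counterpart in the paper's text; there, conditions 2 and 3 are only later shown (Lemma~\ref{types}, Corollary~\ref{typesCount}) to hold \emph{unconditionally} in every $uBg(k,n)$, so that only condition 1 carries content, while you derive all three directly from the walk traced by an optimal sequence --- and your preliminary observation that an optimal sequence yields a genuinely \emph{alternating} Eulerian path (not merely an Eulerian path, which is all the paper claims before the proposition) is correct and sharper than the text's remark.

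One small wrinkle in your necessity counting: if the Eulerian trail traced by the sequence happens to be closed, then at its base vertex the initial departure and the final arrival do not form an alternation-constrained enter/leave pair, so your grouping of the $2m$ incidences into $m$ mixed pairs fails there; the argument as written only yields that the Type-I and Type-II counts at that one vertex differ by $0$ or $2$. The gap is harmless --- conditions 2 and 3 are structural facts about $uBg(k,n)$ itself, provable exactly as in Lemma~\ref{types} by counting the $k$ outgoing continuations and the possible loop created by a palindromic prefix or suffix --- but your self-contained path-counting derivation should either invoke that structural count at the base vertex or note separately that the discrepancy-$2$ case cannot occur in these graphs.
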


\vspace{1mm}

In lieu of a proof of Proposition~\ref{alt_conditions}, we state a modification of Hierholzer's algorithm that produces an alternating circuit or path in a graph satisfying the conditions of the proposition.  First note that if the graph $uBg(k,n)$ has exactly two vertices of odd degree, then the addition of an edge $\overline{{\bm e}}$ between those two vertices results in an graph $\overline{uBg}(k,n)$ with only even-degree vertices.   Secondly, note that Conditions 2 and 3 only apply to vertices representing non-palindromic reflected pairs since incidence types are only defined for these vertices.  Choose a designation of types of incidence relative to vertices representing palindromic reflected pairs in such a way that all vertices in $\overline{uBg}(k,n)$ have an equal number of Type-I and Type-II incidences.  Upon finding an alternating Eulerian circuit in $\overline{uBg}(k,n)$, the edge $\overline{{\bm e}}$ may be removed to form an alternating Eulerian path in $uBg(k,n)$.  It therefore suffices to state an algorithm that finds an alternating Eulerian circuit in a graph that has only even-degree vertices and has an equal number of Type-I and Type-II incidences at every vertex.

Hierholzer's algorithm for finding Eulerian circuits, modified for finding alternating Eulerian circuits, consists of the following components:

\begin{enumerate}
\item Choose any vertex $[\bm{v}]$.  Follow an alternating path of edges starting at $[\bm{v}]$ until the path forms a circuit by returning to $[\bm{v}]$ on an edge of the opposite type relative to $[\bm{v}]$ from the initial edge in the path.  This is possible since every vertex has an equal number of Type-I and Type-II incidences, so if the path enters a vertex on an edge with a Type-I (II) incidence, then there is an edge with a Type-II (I) incidence to that vertex which is not yet part of the path.  

\item There may be a vertex $[\bm{w}]$ belonging to the current circuit $C$ which has incident edges that are not part of $C$. If so, form another alternating circuit $C'$ starting from and returning to $[\bm{w}]$ and not including any edges in $C$.  This is possible since each vertex in the graph $uBg(k,n)$ with the edges in $C$ deleted has even degree.  Choose the circuit $C'$ so that it starts (returns) on an edge of the opposite (same) type relative to $[\bm{w}]$ from (as) the edge that enters $[\bm{w}]$ in $C$.  Condition 2 ensures that this is possible -- if one happens to return to $[\bm{w}]$ on an edge that is of the same type relative to $[\bm{w}]$ as the starting edge, one may continue on until one returns again to $[\bm{w}]$ on an edge of the opposite type.  Splice $C'$ into $C$ to form a new current circuit.  The conditions of the types of edges that start and end $C'$ guarantee that the new current circuit is alternating.  

\item Repeat Step 2 until every vertex in the current circiut, $C$, only has incident edges which belong to $C$. The current circuit will then be an alternating Eulerian circuit. 
\end{enumerate}

\noindent What now remains in question is for what values of $k$ and $n$ the graph $uBg(k,n)$ satisfies the conditions of Proposition~\ref{alt_conditions}.  For small values of $k,n$, we observe the presence of either 0 or 2 odd-degree vertices and can find alternating Eulerian paths. Fig.~\ref{fig:udBpaths} provides by example the existence of unoriented de Bruijn sequences of optimal length for $k,n \leq 3$.

\begin{figure}[H]
\centering
      \subfloat[$uBg(2,2)$]{
      \resizebox{.055\textwidth}{!}{
      \begin{tikzpicture}[>=stealth',
      auto,
      node distance=1.5cm,
      bend angle=20,
      semithick,
      main node/.style={font=\sffamily\bfseries}]
      \tikzset{->-/.style={decoration={markings, mark=at position #1 with {\arrow[scale=1.3]{>}}},postaction={decorate}}}

  \node[main node] (0)                {[0]};
  \node[main node] (1) [below =of 0]  {[1]};
  
  \path[every node/.style={font=\sffamily\scriptsize}]
  
    (0) edge [->-=.5]      node  {} (1)
        edge [loop above]  node  {} (0)
    (1) edge [loop below]  node  {} (1);
      \end{tikzpicture}
      \label{fig:udB2_2path}
     }
   }
   \quad
   \subfloat[$uBg(3,2)$]{
     \scalebox{0.77}{
      \begin{tikzpicture}[>=stealth',
      auto,
      node distance=1.5cm,
      semithick,
      main node/.style={font=\sffamily\bfseries}] 
      \tikzset{->-/.style={decoration={markings, mark=at position #1 with {\arrow[scale=1.5]{>}}},postaction={decorate}}}
 
  \node[main node] (0)                     {[0]};
  \node[main node] (1) [below left=of 0]   {[1]};
  \node[main node] (2) [below right=of 0]  {[2]};

  \path[every node/.style={font=\sffamily\scriptsize}]
  
    (0) edge  [->-=.5]       node   {} (1)
        edge  [loop above]   node   {} (0)
    (1) edge [->-=.5]        node   {} (2)
        edge [loop below]    node   {} (1)
    (2) edge [loop below]    node   {} (2)
        edge [->-=.7]        node   {} (0);
      \end{tikzpicture}
      \label{fig:udB3_2path}
      }
   }
   \quad
   \subfloat [$uBg(2,3)$] {
      \resizebox{.14\textwidth}{!}{
      \begin{tikzpicture}[>=stealth', 
      auto, 
      node distance=1cm, 
      bend angle=20,
      semithick,
      main node/.style={font=\sffamily\bfseries}]
      \tikzset{->-/.style={decoration={markings, mark=at position #1 with {\arrow[scale=1.5]{>}}},postaction={decorate}}}

  \node[main node] (00)                 {[00]};
  \node[main node] (10) [below =of 00]  {[10]};
  \node[main node] (11) [below =of 10]  {[11]};

  \path[every node/.style={font=\sffamily\scriptsize}]
  
    (00) edge [->-=.5]      node  {} (10)
         edge [loop above]  node  {} (00)
    (10) edge [->-=.5]      node  {} (11)
         edge [loop left ]  node  {} (10)
         edge [loop right]  node  {} (10)
    (11) edge [loop below]  node  {} (11);
      \end{tikzpicture}
      \label{fig:udB2_3path}
     }
   } 
    \quad
    \subfloat [$uBg(3,3)$] {
      \scalebox{0.71}{
      \begin{tikzpicture}[>=stealth',
      auto,
      node distance=.5cm,
      semithick,
      main node/.style={font=\sffamily\bfseries}] 
       \tikzset{->-/.style={decoration={markings, mark=at position #1 with {\arrow[scale=1.5]{>}}},postaction={decorate}}}
  
  \node[main node] (00)                                   {[00]};
  \node[main node] (20) [below left=of 00,yshift=-.25cm]  {[20]};
  \node[main node] (01) [below right=of 00,yshift=-.25cm] {[10]};
  \node[main node] (21) [below =of 00,yshift=-1.5cm]      {[21]};
  \node[main node] (22) [left=of 21,xshift=-1.5cm]        {[22]};
  \node[main node] (11) [right=of 21,xshift=1.5cm]        {[11]};

  \path[every node/.style={font=\sffamily\scriptsize}]
  
    (00)edge  [loop above]   node         {} (00)
         edge  [->-=.6]       node [right] {} (01)
         
    (20) edge  [->-=.6]       node         {} (01)
         edge  [loop above]   node [left]  {} (20)
         edge  [loop below]   node         {} (20)
         edge  [->-=.6]       node [right]  {} (00)
         
    (01) edge  [->-=.6]       node [right] {} (11)
         edge  [loop above]   node [right] {} (01)
         edge  [loop below]   node         {} (01)
         edge  [->-=.6]       node [right] {} (21)
         
    (21) edge  [->-=.6]       node [left]  {} (20)
         edge  [loop above]   node         {} (21)
         edge  [loop below]   node         {} (21)
         edge  [->-=.6]       node         {} (22)
         
    (22) edge [loop below]    node [left]  {} (22)
         edge [->-=.6]        node [left]  {} (20)
         
    (11) edge [loop below]    node [right] {} (11)
         edge [->-=.6]        node         {} (21);
      \end{tikzpicture}
      \label{fig:udB3_3path}
      }
   }
   
   \caption{Unoriented de Bruijn graphs $uBg(k,n)$.  The graphs are undirected; the arrows on the edges denote the directions taken in the alternating Eulerian paths that generate  the following unoriented de Bruijn sequences of optimal length: (a) 0011, (b) 0011220, (c) 00010111, (d) 00010111212020122200.}
   \label{fig:udBpaths}
\end{figure}  


\section{Nontrivial unoriented de Bruijn sequences with optimal length}

\noindent For cases in which either $k$ is two or odd and $n \leq 3$, it is  possible to form an unoriented de Bruijn sequence with length $l(k,n)$ because a $uBg(k,n)$ graph will admit an alternating Eulerian path. The proof of this claim relies on the count of odd-degree vertices in the graph $uBg(k,n)$ which is determined by first considering the numbers of Type-I and Type-II incidences relative to each vertex, as given in Lemma~\ref{types}. Recall from Section~2 that additional undirected loops appear in unoriented de Bruijn graphs (those corresponding to edges $\bm{v} \rightarrow \bm{v'}$). Vertices with one additional loop have odd degree. 

The prefix and suffix  of a word $\bm{v}$ determine the edges associated to the vertex $[\bm{v}]$. More specifically, this particular substructure of a word determines the number of loops incident to it.  Every word ${\bm v}$ falls into one of three disjoint categories:

\begin{enumerate}
\item The prefix and suffix of $\bm{v}$ form non-palindromes, and there is no loop at $[\bm{v}]$.  For example, the prefix 100 and suffix 001 of $1001$ are not palindromes, and there is no loop at $[1001]$ in $uBg(2,5)$.
  
\item The prefix of exactly one of either $\bm{v}$ or $\bm{v'}$ form a palindrome, and there is exactly one loop at $[\bm{v}]$.  For example, 101 appears at the end of 1101 and at the beginning of 1011, and there is exactly one loop  $ \hookrightarrow [1101]$.

\item The prefix and suffix of $\bm{v}$ form palindromes, and there are exactly two loops at $[\bm{v}]$.  For example, there are two loops incident with $[1010]$ because 010 appears at the end of 1010 and at the beginning of 0101, and 101 appears at the end of 0101 and at the beginning of 1010.
\end{enumerate}   

In general, Lemma~\ref{types} shows that there is a 1-1 correspondence between the appearance of a palindrome as the prefix or suffix of a word and the existence of a loop $\hookrightarrow \bm{[v]}$ in $uBg(k,n)$ as exempified in the above enumeration. 

\begin{lemma}
\label{types}
Consider a vertex $[\bm{v}]$ in $uBg(k,n)$.  
If the prefix (suffix) of $\bm{v}$ is a non-palindrome, then the number of incidences of Type II (I) relative to $[\bm{v}]$ is $k$. 
If the prefix (suffix) of $\bm{v}$ is a palindrome, then the number of incidences of Type II (I) relative to $[\bm{v}]$ is $k+1$.  
\end{lemma}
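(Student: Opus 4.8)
The plan is to count the Type I incidences directly and then obtain the Type II count by a reflection symmetry. Write $\bm{v} = v_1 \cdots v_{n-1}$, so that the suffix of $\bm{v}$ is $v_2 \cdots v_{n-1}$. An incidence at $[\bm{v}]$ is of Type I exactly when it is witnessed by a length-$n$ word $\bm{e}$ with $\bm{v}$ as prefix or $\bm{v'}$ as suffix. The words with $\bm{v}$ as prefix are precisely $P = \{ v_1 \cdots v_{n-1} w : w \in \Sigma_k\}$, the $k$ out-edges of $\bm{v}$ in $Bg(k,n)$; the words with $\bm{v'}$ as suffix are their reflections, which by Lemma~\ref{undir} label the same undirected edges. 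Hence every Type-I-incident edge is represented by a word of $P$, and it suffices to count, for each $\bm{e} \in P$, how many Type I incidences it contributes at $[\bm{v}]$.

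First I would show that a non-palindromic $\bm{e} \in P$ contributes exactly one. Such an edge cannot be a loop at $[\bm{v}]$: its prefix is $\bm{v}$, while its suffix $v_2 \cdots v_{n-1} w$ can equal $\bm{v}$ or $\bm{v'}$ only in cases that force $\bm{e}$ to be a palindrome (equality with $\bm{v'}$ forces $v_{i+1} = v_{n-i}$ together with $w = v_1$, and equality with $\bm{v}$ forces $\bm{v}$ to be constant). So a non-palindromic $\bm{e}\in P$ is an ordinary non-loop edge and supplies a single Type I incidence.

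Next I would treat palindromic $\bm{e} \in P$. A palindrome with prefix $\bm{v}$ has suffix $\bm{v'}$, so both of its endpoints are $[\bm{v}]$ and it is a loop; since its prefix is $\bm{v}$ and its suffix is $\bm{v'}$, both of the loop's two attachments are Type I, contributing two incidences. Finally I would count these palindromes: the word $v_1 \cdots v_{n-1} w$ is a palindrome if and only if $w = v_1$ and $v_2 \cdots v_{n-1}$ is a palindrome, i.e. if and only if the suffix of $\bm{v}$ is a palindrome, in which case exactly one such word exists. Combining the three observations, the number of Type I incidences equals $k$ when the suffix of $\bm{v}$ is a non-palindrome and $k+1$ when it is a palindrome.

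The Type II statement then follows without repeating the work: an incidence is Type II relative to $[\bm{v}]$ exactly when it is Type I relative to the same vertex presented by the representative $\bm{v'}$, since the Type II conditions ($\bm{v}$ a suffix or $\bm{v'}$ a prefix) are the Type I conditions with $\bm{v}$ and $\bm{v'}$ interchanged. Applying the Type I count to $\bm{v'}$ and observing that the suffix of $\bm{v'}$ is the reversal of the prefix of $\bm{v}$ — hence a palindrome precisely when the prefix of $\bm{v}$ is — yields the claim for Type II. I expect the main obstacle to be the loop bookkeeping: one must verify carefully that non-palindromic edges never close into loops at $[\bm{v}]$ while palindromic ones always do, and that a loop's two attachments are both Type I, so that the extra incidence is attributed correctly. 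The identity ``$v_1\cdots v_{n-1}w$ is a palindrome iff the suffix of $\bm{v}$ is a palindrome'' is the linchpin that converts this edge count into the stated $k$ versus $k+1$ dichotomy.
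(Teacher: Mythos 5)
Your proof is correct and follows essentially the same route as the paper's: both parametrize the Type-I incidences by the $k$ words $\bm{v}w$, $w \in \Sigma_k$, and observe that exactly when the suffix of $\bm{v}$ is a palindrome the choice $w = v_1$ produces a loop at $[\bm{v}]$ contributing an extra (two-fold) Type-I incidence, giving $k$ versus $k+1$. Your extra bookkeeping (non-palindromic members of $P$ never close into loops, the palindromic member's two attachments are both Type I) and your reflection symmetry $\bm{v} \leftrightarrow \bm{v'}$ for the Type-II count simply make explicit what the paper compresses into ``a loop contributes two to the count'' and ``an analogous argument holds.''
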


\begin{proof}
For  $\bm{v} = v_1 v_2 ... v_{n-2} v_{n-1}$, there is an edge of Type I relative to $[\bm{v}]$ to each of the  vertices representing the equivalence classes of the words $\bm{w} = v_2 v_3....v_{n-1} u$, where there are $k$ choices for $u$.  Thus there are at least $k$ incidences of Type I relative to $[\bm{v}]$.  When the suffix of $\bm{v}$ is a non-palindrome, this is exactly the count. But, if the suffix of $\bm{v}$ is a palindrome, then ${\bm w} = v_{n-1} ...v_3 v_2 u$, which equals $\bm{v}'$ when $u = v_1$.  In this case, the edge between $[\bm{v}]$ and $[\bm{w}]$ is a loop $[\bm{v}] \hookrightarrow [\bm{v}]$, so there are $k+1$ incidences of Type I relative to $[\bm{v}]$. An analogous argument holds for edges of Type II relative to $[\bm{v}]$ and the prefix of $\bm{v}$.

\qed
\end{proof}

Note that if $\bm{v}$ is itself a palindrome, then each of its edges are of Type I and of Type II.  Rephrasing the result of Lemma~\ref{types} as in Corollary~\ref{typesCount} and Table~1 more directly addresses the conditions for the existence of an alternating Eulerian path: 

\begin{corollary}
\label{typesCount}
If $\bm{[v]}$ is a vertex in $uBg(k,n)$ representing a non-palindrome reflected pair, then $\bm{[v]}$ has an equal number of Type-I and Type-II incidences if deg\textnormal{(}$\bm{[v]}$\textnormal{)} is even, and the number of Type-I and Type-II incidences differ by one if deg\textnormal{(}$\bm{[v]}$\textnormal{)} is odd.
\end{corollary}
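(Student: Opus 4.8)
The plan is to read off the two incidence counts directly from Lemma~\ref{types} and combine them with the observation that, at a non-palindromic vertex, every incidence is of exactly one type, so that the degree is simply the sum of the two counts. Write $T_{\mathrm{I}}$ and $T_{\mathrm{II}}$ for the numbers of Type-I and Type-II incidences relative to $[\bm{v}]$. By Lemma~\ref{types}, $T_{\mathrm{I}} = k$ if the suffix of $\bm{v}$ is a non-palindrome and $T_{\mathrm{I}} = k+1$ if it is a palindrome, and symmetrically $T_{\mathrm{II}} = k$ or $k+1$ according to whether the prefix of $\bm{v}$ is a non-palindrome or a palindrome. Thus each of $T_{\mathrm{I}}, T_{\mathrm{II}}$ lies in $\{k, k+1\}$ before any case analysis is made.

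The key structural step is to verify that $\deg([\bm{v}]) = T_{\mathrm{I}} + T_{\mathrm{II}}$ whenever $[\bm{v}]$ is non-palindromic. First, every edge incident to $[\bm{v}]$ has $\bm{v}$ or $\bm{v}'$ as its prefix or suffix, so each incidence is of at least one type. Second, using the hypothesis $\bm{v} \neq \bm{v}'$, I would show that no incidence can be of both types: an incidence that is simultaneously Type I and Type II forces $\bm{v}$ (or $\bm{v}'$) to be both the prefix and the suffix of $\bm{e}$, or forces $\bm{v} = \bm{v}'$, either of which contradicts $[\bm{v}]$ being non-palindromic. Since the counts $T_{\mathrm{I}}$ and $T_{\mathrm{II}}$ already incorporate the convention that a loop contributes two incidences (as used in Lemma~\ref{types}), and the two types partition all incidences at a non-palindromic vertex, their sum is exactly the degree.

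With these two facts in hand, the conclusion follows from a short case split on whether the prefix and suffix of $\bm{v}$ are palindromes. If neither is, then $T_{\mathrm{I}} = T_{\mathrm{II}} = k$ and $\deg([\bm{v}]) = 2k$; if both are, then $T_{\mathrm{I}} = T_{\mathrm{II}} = k+1$ and $\deg([\bm{v}]) = 2k+2$; in both of these cases the degree is even and the counts are equal. If exactly one of them is a palindrome, then $\{T_{\mathrm{I}}, T_{\mathrm{II}}\} = \{k, k+1\}$, so $\deg([\bm{v}]) = 2k+1$ is odd while the counts differ by one. These three cases are exhaustive, and together they establish precisely the claimed equivalence between the parity of the degree and the difference of the incidence counts.

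I expect the only real obstacle to be the middle step, namely justifying $\deg([\bm{v}]) = T_{\mathrm{I}} + T_{\mathrm{II}}$ by ruling out incidences of both types; this is exactly where the non-palindromic hypothesis is essential, and it is the complement of the earlier remark that at a palindromic vertex every incidence is of both types. Once that partition is in place, the remaining parity bookkeeping is immediate from Lemma~\ref{types}.
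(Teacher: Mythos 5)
Your proposal is correct and follows essentially the same route as the paper, which states this corollary without a separate proof as a direct rephrasing of Lemma~\ref{types} via the case analysis summarized in Table~1. You additionally make explicit the step the paper leaves implicit---that at a non-palindromic vertex the two incidence types partition all incidences, so $\deg([\bm{v}]) = T_{\mathrm{I}} + T_{\mathrm{II}}$---and your justification of that partition using $\bm{v} \neq \bm{v}'$ is sound.
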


\begin{table}[H]
   \begin{center}
   \begin{tabular}{| l | l || l | l |}
\hline
prefix & suffix & Type I & Type II\\
\hline
\hline
p & p & $k+1$ & $k+1$ \\
\hline
 p (np) & np (p) & $k+1$ ($k$) & $k$ ($k+1$)\\
\hline
np & np & $k$ & $k$\\
\hline
   \end{tabular}
    \\[.75cm]
   \caption{Characterization of incidences of $\bm{[v]}$. The number of Type-I (II) incidences relative to a vertex $\bm{[v]}$ as determined by the prefix and suffix of $\bm{v}$. `p' denotes palindrome; `np' denotes non-palindrome.}
   \end{center}
     \label{table:lemmaChar}
\end{table}

Corollary \ref{typesCount} guarantees that every unoriented de Bruijn graph satisfies Conditions 2 and 3 of Proposition~\ref{alt_conditions}.  The only potential impediment to the existence of an alternating Eulerian path is therefore Condition 1 of Proposition~\ref{alt_conditions}, namely that there be exactly 0 or 2 vertices of odd degree. The proof of Theorem \ref{flip} proceeds by counting the number of vertices of odd degree, which turns out to be larger than 2 if either $k$ is even and larger than 2 or $n$ is larger than 3 and neither $k$ nor $n$ is 1.  

\begin{theorem}
\label{flip}
Nontrival unoriented de Bruijn sequences $uB(k,n)$ of optimal length exist if and only if $k$ is either two or odd and $n \leq 3$.   
\end{theorem}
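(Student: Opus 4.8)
The plan is to reduce the theorem to a single counting problem. By Proposition~\ref{alt_conditions}, an optimal $uB(k,n)$ exists exactly when $uBg(k,n)$ satisfies Conditions 1--3, and Corollary~\ref{typesCount} shows Conditions 2 and 3 hold for \emph{every} unoriented de Bruijn graph. Hence existence is equivalent to Condition~1 alone: that $uBg(k,n)$ have $0$ or $2$ odd-degree vertices. Since the number of odd-degree vertices is automatically even, the whole theorem amounts to showing that this number, call it $N_{\mathrm{odd}}(k,n)$, is at most $2$ precisely when $k$ is two or odd and $n\le 3$.

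First I would determine the degree of each vertex $[\bm v]$. For non-palindromic $\bm v$ every incident edge is of exactly one type, so the degree is the sum of the Type-I and Type-II incidence counts of Lemma~\ref{types}, namely $2k$, $2k+1$, or $2k+2$ according to whether zero, one, or two of the prefix and suffix of $\bm v$ are palindromes; thus the odd-degree non-palindromic vertices are exactly those with exactly one palindromic affix. The subtle case --- and the main obstacle --- is a palindromic vertex $\bm v=\bm v'$, where every edge counts as both Type I and Type II, so Lemma~\ref{types} no longer gives the degree. Here I would work directly in $Bg(k,n)$: the $2k$ directed edges at $\bm v$ pair off under reflection (an out-edge $\bm v\to\bm w$ with the in-edge $\bm w'\to\bm v$) into $k$ undirected edges at $[\bm v]$, and such a pair becomes an undirected loop precisely when its length-$n$ word is constant, which forces $\bm v$ to be constant. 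This gives degree $k$ for a non-constant palindrome and degree $k+1$ for a constant one; a quick check against $[010],[101]$ (degree $2$) and $[000],[111]$ (degree $3$) in $uBg(2,4)$ confirms this.

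Next I would count the three families contributing to $N_{\mathrm{odd}}$. The $k$ constant palindromes are odd-degree iff $k$ is even; the $k^{\lceil(n-1)/2\rceil}-k$ non-constant palindromes are odd-degree iff $k$ is odd. For the non-palindromic vertices with exactly one palindromic affix, I count length-$(n-1)$ words with exactly one of prefix/suffix palindromic; these are automatically non-palindromes occurring in reflected pairs, so the number of classes is $A-C$, where $A=k^{\lceil(n-2)/2\rceil+1}$ counts words with palindromic prefix and $C$ counts words with both affixes palindromic. The one genuine computation is $C$: the two palindrome conditions together force $v_i=v_{i+2}$ for all $i$, so $\bm v$ is $2$-periodic, and an odd-versus-even length check then gives $C=k^2$ when $n$ is odd and $C=k$ when $n$ is even.

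Finally I would assemble $N_{\mathrm{odd}}$ by the parities of $k$ and $n$. For $k$ even this gives $N_{\mathrm{odd}}=k^{n/2}$ ($n$ even) and $k^{(n+1)/2}-k^2+k$ ($n$ odd), and for $k$ odd it factors as $2k\bigl(k^{n/2-1}-1\bigr)$ ($n$ even) and $(k+1)k\bigl(k^{(n-3)/2}-1\bigr)$ ($n$ odd). Reading off each expression finishes the proof: for odd $k$ both expressions vanish exactly when $n\in\{2,3\}$ and exceed $2$ for $n\ge4$; for even $k$ the value is at least $4$ as soon as $k\ge4$ or $n\ge4$, leaving only $(k,n)=(2,2)$ and $(2,3)$. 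Hence $N_{\mathrm{odd}}\in\{0,2\}$ exactly when $k$ is two or odd and $n\le3$, which by the opening reduction is precisely the stated existence criterion for nontrivial optimal sequences; the realizing alternating Eulerian paths are exhibited in Fig.~\ref{fig:udBpaths}.
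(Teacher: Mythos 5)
Your proposal is correct and follows essentially the same route as the paper: reduce via Proposition~\ref{alt_conditions} and Corollary~\ref{typesCount} to counting odd-degree vertices, count non-palindromic vertices with exactly one palindromic affix (with the same $A-C$ computation, $C=k$ or $k^2$ by the $2$-periodicity argument), handle constant and non-constant palindromic vertices separately, and assemble the same four parity-case formulas for $ov(k,n)$. Your only additions---the explicit reflection-pairing argument in $Bg(k,n)$ giving degrees $k$ and $k+1$ at palindromic vertices, which the paper merely asserts via Lemma~\ref{undir}, and the factored forms for odd $k$---are correct refinements of the paper's argument rather than a different approach.
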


\begin{proof}
Let $k$ and $n$ be integers larger than 1.  By Corollary \ref{typesCount}, the vertices $[\bm{v}]$ with $\bm{v}$ a non-palindome of odd degree are those for which exactly one of the prefix or suffix of $\bm{v}$ is a palindrome. The count of $[\bm{v}]$ with this property proceeds by first counting the number of length-$(n-1)$ words that contain a length-$(n-2)$ palindrome, which is $k(k^{(n-2)/2})$ when $n$ is even, and $k^2(k^{(n-3)/2})$ when $n$ is odd. We then subtract from these counts the number of length$-(n-1)$ words where both the first and the last length-$(n-2)$ subwords form a palindrome, of which there are $k$ when $n$ is even and $k^2$ when $n$ is odd (see Fig.~\ref{fig:p_p}). 

The count of odd-degree vertices $[\bm{v}]$ with $\bm{v}$ a palindome depends on the parity of $k$.  The degree of any constant vertex (that is, a vertex $[\bm{v}]$ for which all of the characters of $\bm{v}$ are the same) is $k+1$. Thus when $k$ is even, the count of constant vertices that have odd degree is $k$. When $k$ is odd, constant vertices have even degree, and as such, do not contribute to the count. Similarly, by Lemma~\ref{undir}, the degree of any non-constant palindrome vertex is $k$. The count of such vertices is $(k^{\floor{n/2}}-k)$. Therefore, the total count of the number $ov(k,n)$ of odd-degree vertices in $uBg(k,n)$ is

\begin{center}
$ov(k,n) =$ 
$\begin{cases}
k^{n/2}, & n\mbox{ even, }k\mbox{ even}\\
k^{(n+1)/2}-k^2+k, & n\mbox{ odd, }k\mbox{ even}\\
2(k^{n/2}-k), & n\mbox{ even, }k\mbox{ odd}\\
k^{(n+1)/2}+k^{(n-1)/2}-k^2-k, & n\mbox{ odd, }k\mbox{ odd}.\\
\end{cases}$
\end{center}

Observe that if $k$ is two or odd and $n\leq 3$ the count of odd-degree vertices remains less than or equal to 2. However, if either $k>2$ is even or $n>3$, then the count is greater than 2. Therefore, an Eulerian path  exists in $uBg(k,n)$ if $k$ is two or odd and $n \leq 3$. 

\qed
\end{proof}

\begin{figure}[H]
 \centering
 \subfloat[]{
    \includegraphics[scale=.33]{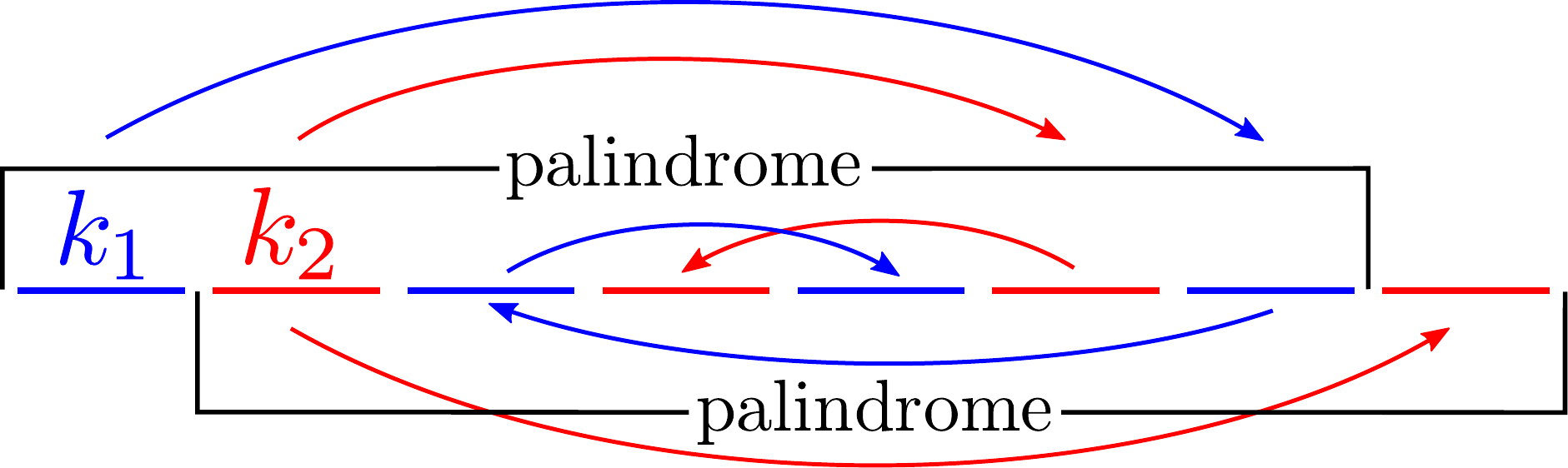}
    \label{fig:pal_pal_odd}
  }
  \subfloat[]{
    \includegraphics[scale=.33]{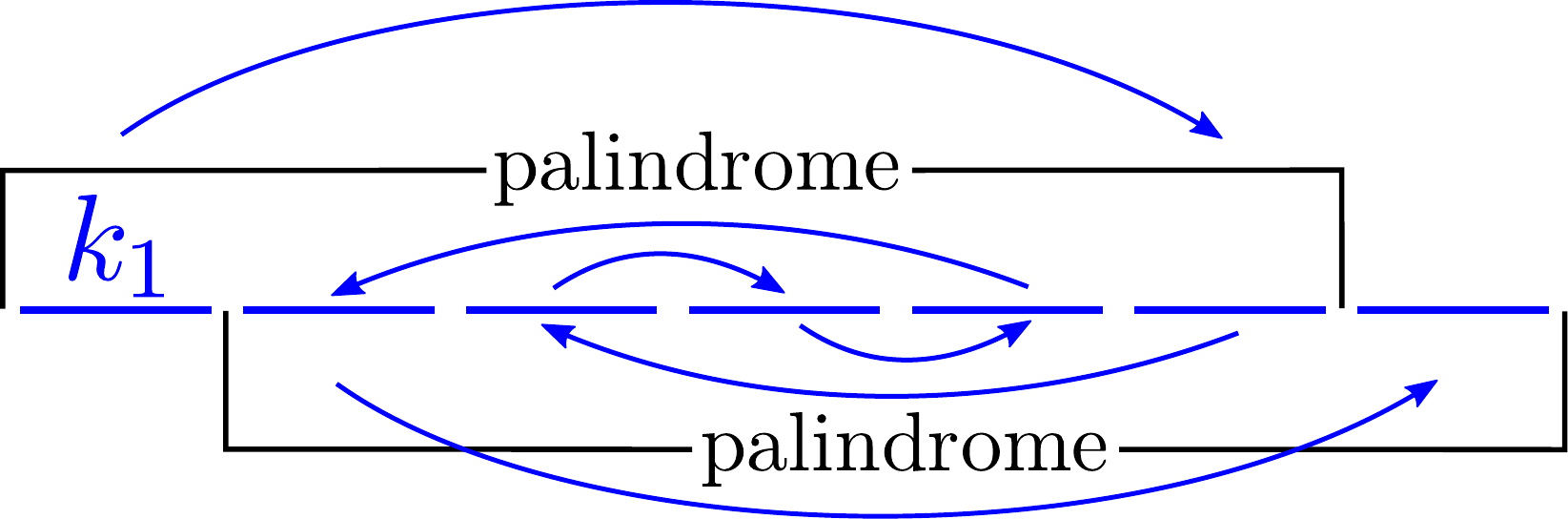}
    \label{fig:p_p_even}
    }
    \caption{(color online) Derivation of the count of words of length $n-1$ such that both the first and last $n-2$ characters form a palindrome when (a) $n$ is even and (b) $n$ is odd. The letters labeled $k_j$ represent a choice of a character from an alphabet $\Sigma_k$ at that position. The paths following the arrows (beginning with the choice of $k_j$) show which subsequent characters are determined by the palindromic conditions.}
    \label{fig:p_p}
\end{figure}

\section{Unoriented de Bruijn sequences of suboptimal length}

\noindent It has been established that nontrivial unoriented de Bruijn sequences of optimal length can be constructed from alternating Eulerian paths or circuits in unoriented de Bruijn graphs if and only if $n$ is not greater than 3 and $k$ is two or odd. The focus of this section is the construction of nontrivial unoriented de Bruijn sequences (of suboptimal length) for $k$ larger than two and even or $n$ larger than 3. Such a sequence will have at least one word from every reflected pair and have as few duplicate representations from reflected pairs as possible. That is, we are asking for the minimum number of edges which must be repeated in $uBg(k,n)$ so that the resulting graph has an alternating Eulerian path.  In general, a non-Eulerian graph can be \textit{Eulerized} by pairing odd-degree vertices and connecting pairs with paths formed by duplicating edges in the graph.  Any optimal Eulerization of $uBg(k,n)$ will retain two vertices of odd degree and will add as few duplicated edges as possible.

In Fig.~\ref{fig:udB}, we have redrawn undirected de Bruijn graphs with edge types indicated by edge end markers. A triangle at the end of an edge represents that the edge is of Type-I relative to the corresponding vertex, Type-II is unmarked, and dashed lines indicating duplicated edges in an Eulerization of the graph.  Consider first Fig.~\ref{fig:color_udB2_4}, showing $\overline{uBg}(2,4)$, having added an additional edge to ${uBg}(2,4)$. ${uBg}(2,4)$ has 4 odd-degree vertices, $[000],[111],[100],[110]$, and thus does not support an Eulerian path.  In order to Eulerize the graph, it is sufficient to add a Type-I edge between [100] and [110], since both vertices require a Type-I edge in order to be traversable twice. With this additional edge, there are only two vertices of odd degree. Additionally, each vertex that does not represent a palindrome has an equal number of Type-I and Type-II edges. Therefore, there is an alternating Eulerian path in the Eulerized graph. An example of a resulting unoriented de Bruijn sequence $uB(2,4)$ is $00001100101111$ which has both members of the reflected pair $[1100]$ when read forwards but no other redundancy.  The remaining subfigures of Fig.~\ref{fig:udB} illustrate other graphs and Eulerizations that admit alternating Eulerian paths.

Finding an optimal Eulerization of $uBg(k,n)$ can be seen as a variant of the well-known route inspection problem in which it is asked, ``for an undirected graph $G$, what is the minimum length path that visits every edge at least once?'' Solutions to the route inspection problem and many variants are known~\cite{edmonds}.  Our problem is a variation of the classic, undirected problem because duplicate edges added to $uBg(k,n)$ must be of the correct type.  Rather than modifying existing solutions to suit the peculiarities of unoriented de Bruijn graphs, we provide an upper bound for the number of duplicate edges needed to Eulerize $uBg(k,n)$ and thereby derive an upper bound for the length of unoriented de Bruijn sequences. 

Recall that the diameter of a graph is the maximum distance between any two vertices, where distance is the length of the shortest path between them.  The diameter of a (directed) de Bruijn graph, $Bg(k,n)$, can be seen to be $n-1$ by observing that any word of length $n-1$ can be transformed into any other word in $n-1$ shifts or less.  \textit{A priori} the diameter of $uBg(k,n)$ could be less than $Bg(k,n)$.  However, the observation that a valid sequence will be generated by a path in $uBg(k,n)$ only if the path enters and leaves each vertex on edges of different types is exactly the requirement that the path obeys the direction of edges in $Bg(k,n)$. This observation leads to the following proposition:

\begin{proposition}
	The length of an unoriented de Bruijn sequence, $uB(k,n)$, is bounded above by 
	$$
		l(k,n) + (n-1)[ov(k,n)/2 - 1],
	$$
	if $ov(k,n) > 2$.
	\label{prop:upperbound}
\end{proposition}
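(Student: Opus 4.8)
The plan is to realize every $uB(k,n)$ as an alternating covering walk in $uBg(k,n)$, to note that its length exceeds $l(k,n)$ by exactly the number of repeated edge-traversals, and then to exhibit one Eulerization of $uBg(k,n)$ whose alternating Eulerian path repeats at most $(n-1)[ov(k,n)/2-1]$ edges. For the bookkeeping, recall that $uBg(k,n)$ has $E=(k^{n}+k^{\lceil n/2\rceil})/2$ edges and that reading a word off an alternating walk costs $n-1$ characters for the initial length-$n$ subword and one character per subsequent edge. Hence a covering alternating walk that repeats $D$ edge-traversals yields a sequence of length $E+D+(n-1)=l(k,n)+D$, since $l(k,n)=E+(n-1)$. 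It therefore suffices to produce an Eulerization that (i) admits an alternating Eulerian path and (ii) adds at most $(n-1)[ov(k,n)/2-1]$ duplicate edges.

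Next I would connect the odd-degree vertices by short duplicated walks. By the count in the proof of Theorem~\ref{flip} there are $ov(k,n)$ odd-degree vertices, an even number; since $ov(k,n)>2$, I pair all but two of them into $ov(k,n)/2-1$ pairs and join the two vertices of each pair by a duplicated alternating walk. The key tool is the correspondence noted just before the proposition: a walk in $uBg(k,n)$ that enters and leaves each vertex on edges of opposite type is exactly a walk that follows the edge directions of $Bg(k,n)$. Because any length-$(n-1)$ word can be transformed into any other in at most $n-1$ symbol shifts, the diameter of $Bg(k,n)$ is $n-1$, so each pair can be joined by an alternating walk of at most $n-1$ edges. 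Duplicating these walks adds at most $(n-1)[ov(k,n)/2-1]$ edges, which is the claimed bound on $D$ once an alternating Eulerian path is shown to exist.

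Finally I would verify that the Eulerized graph still satisfies the hypotheses of Proposition~\ref{alt_conditions}, so that the modified Hierholzer procedure produces an alternating Eulerian path and hence a valid $uB(k,n)$. Duplicating a directed walk adds one Type-I and one Type-II incidence at every internal vertex, preserving the balance guaranteed by Corollary~\ref{typesCount}, and adds a single incidence at each endpoint, flipping its degree-parity from odd to even while leaving the two unpaired vertices as the only odd-degree ones. The delicate point, which I expect to be the main obstacle, is orientation: by Table~1 an odd non-palindromic endpoint carries an excess of exactly one incidence of a definite type, and a duplicated walk restores balance only if it is oriented to supply the deficient type, so a Type-II--heavy endpoint must be a tail (receiving the added Type-I) and a Type-I--heavy endpoint a head. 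I would therefore pair each Type-II--heavy odd vertex with a Type-I--heavy one. Such a pairing exists because summing incidences over all edges shows that the total number of Type-I incidences equals the total number of Type-II incidences (each directed edge of $Bg(k,n)$ contributes one of each), which forces the Type-I--heavy and Type-II--heavy vertices to be equinumerous; the palindromic odd vertices impose no type constraint and serve as flexible endpoints. With the type balance preserved at every vertex and exactly two odd-degree vertices remaining, the resulting sequence has length $l(k,n)+D\le l(k,n)+(n-1)[ov(k,n)/2-1]$.
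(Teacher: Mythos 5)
Your overall architecture is the same as the paper's: the length accounting $l(k,n)=E+(n-1)$ with $E=(k^n+k^{\lceil n/2\rceil})/2$, the observation that alternating walks in $uBg(k,n)$ are precisely projections of directed walks in $Bg(k,n)$ so that the alternating-path diameter is at most $n-1$, and the Eulerization obtained by joining $ov(k,n)/2-1$ pairs of odd-degree vertices with duplicated alternating walks of length at most $n-1$, leaving two odd vertices as path endpoints. The paper's proof consists of exactly these steps, stated tersely and without the type bookkeeping you attempt; your verification that an internal vertex of a duplicated directed walk gains one Type-I and one Type-II incidence is correct and is a worthwhile addition.

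The gap is in your ``delicate point.'' The parenthetical justification --- that summing over edges gives equal global totals of Type-I and Type-II incidences because ``each directed edge contributes one of each'' --- is false: incidence types are pinned to the \emph{canonical} representative of each vertex class, so a single edge of $uBg(k,n)$ can be of Type I relative to both of its endpoints. The paper's own figures exhibit this: the edge $[201]$ in $uBg(3,3)$ is Type I relative to both $[20]$ and $[10]$, since $20$ is the prefix of $201$ and $10$ is the prefix of the reflection $102$ (see Fig.~\ref{fig:udB}(c), where that edge carries triangles at both ends); likewise the loop $[1001]$ at $[100]$ in $uBg(2,4)$ contributes two Type-I incidences. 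Indeed, by Lemma~\ref{types} the global sum of (Type I minus Type II) over non-palindromic vertices equals the number of Type-I-heavy odd vertices minus the number of Type-II-heavy ones, so deducing equinumerosity from global balance is circular. The equinumerosity you want does happen to be true --- the digit-complement map $v_i\mapsto k-1-v_i$ commutes with reversal, preserves which ends of a word are palindromic, and reverses the order of $k$-ary integers, hence gives a bijection between Type-I-heavy and Type-II-heavy classes --- but, more to the point, it is unnecessary: each non-palindromic class $[u]$ has two directed representatives $u,u'$ in $Bg(k,n)$, and the connecting directed path may be chosen to start at $u$ or at $u'$ and to end at $v$ or at $v'$, thereby supplying whichever incidence type is deficient at each endpoint. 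Hence an \emph{arbitrary} pairing of the odd vertices can be joined by alternating walks of length at most $n-1$ with the correct endpoint types, which is why the paper's proof needs no type-matched pairing. With that repair (or with the complementation bijection) your argument is complete and yields exactly the paper's bound.
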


\begin{proof} The maximum distance between any two vertices in $uBg(k,n)$ is $n-1$ if we take distance to be measured by the shortest alternating path between them.  Thus, an upper bound on the alternating-path distance between any two odd-degree vertices in $uBg(k,n)$ is $(n-1)$.  Say there are $ov(k,n) > 2$ vertices of odd degree.  Then the number of duplicate edges in an optimal Eulerization of $uBg(k,n)$ is bounded above by $(n-1)[ov(k,n)/2 - 1]$.  Adding the number of repeated edges to $l(k,n)$ gives the bound on the length of $uB(k,n)$. \qed
\end{proof}

Proposition~\ref{prop:upperbound} insists that $ov(k,n) > 2$ because if $ov(k,n) = 2$ (or $0$) then $uBg(k,n)$ admits an alternating Eulerian path (or circuit) and so the length of $uB(k,n)$ is optimal and no duplicate edges are needed.  

Note that the bound given in Proposition~\ref{prop:upperbound} is achieved for some $k$ and $n$ when $ov(k,n) > 2$. In the case of $uBg(4,3)$ -- shown in Fig.~\ref{fig:color_udB4_3} -- the bound $(n-1)[ov(k,n)/2 - 1] = 2$ is the number of edges in an optimal Eulerization. Similarly, as shown in  Fig.~\ref{fig:color_udB2_4}, the graph $uBg(2,4)$ requires $(2-1)(4/2-1) = 1$ additional edge. On the other hand, the bound is not always strict, as exemplified by $uBg(2,5)$ (Fig.~\ref{fig:color_udB2_5}), where only 2 additional edges are needed to Eulerize the graph, while $(5-1)[ov(2,5)/2 - 1] = 8$.

To assess how far from optimal these unoriented de Bruijn sequences can be (in the worst case) we compute the ratio
$$
	r(k,n) = \begin{cases}  0,  & \text{ if } ov(k,n) = 0 \; \text{or} \; 2 \\  (n-1)[ov(k,n)/2 - 1]/l(k,n), & \text{ if } ov(k,n) > 2,  \end{cases}
$$
of the upper bound on the number of duplicate edges to the optimal length of an unoriented de Bruijn sequence.  Simple calculations show that in the limit as either $k$ or $n$ is taken to infinity, $r(k,n)$ converges to zero. Values of $r(k,n)$ are shown for small choices of $k, n \geq 2$ in Fig.~\ref{fig:ratio} as an illustration. Thus $k$ or $n$ can be chosen large enough so that the fraction of redundant reflected pairs in a suboptimal unoriented de Bruijn sequence is an arbitrarily small fraction of all reflected pairs.

\begin{figure}[H]
\captionsetup[subfigure]{width=1.9cm}
\centering 
\includegraphics[width=.5\textwidth]{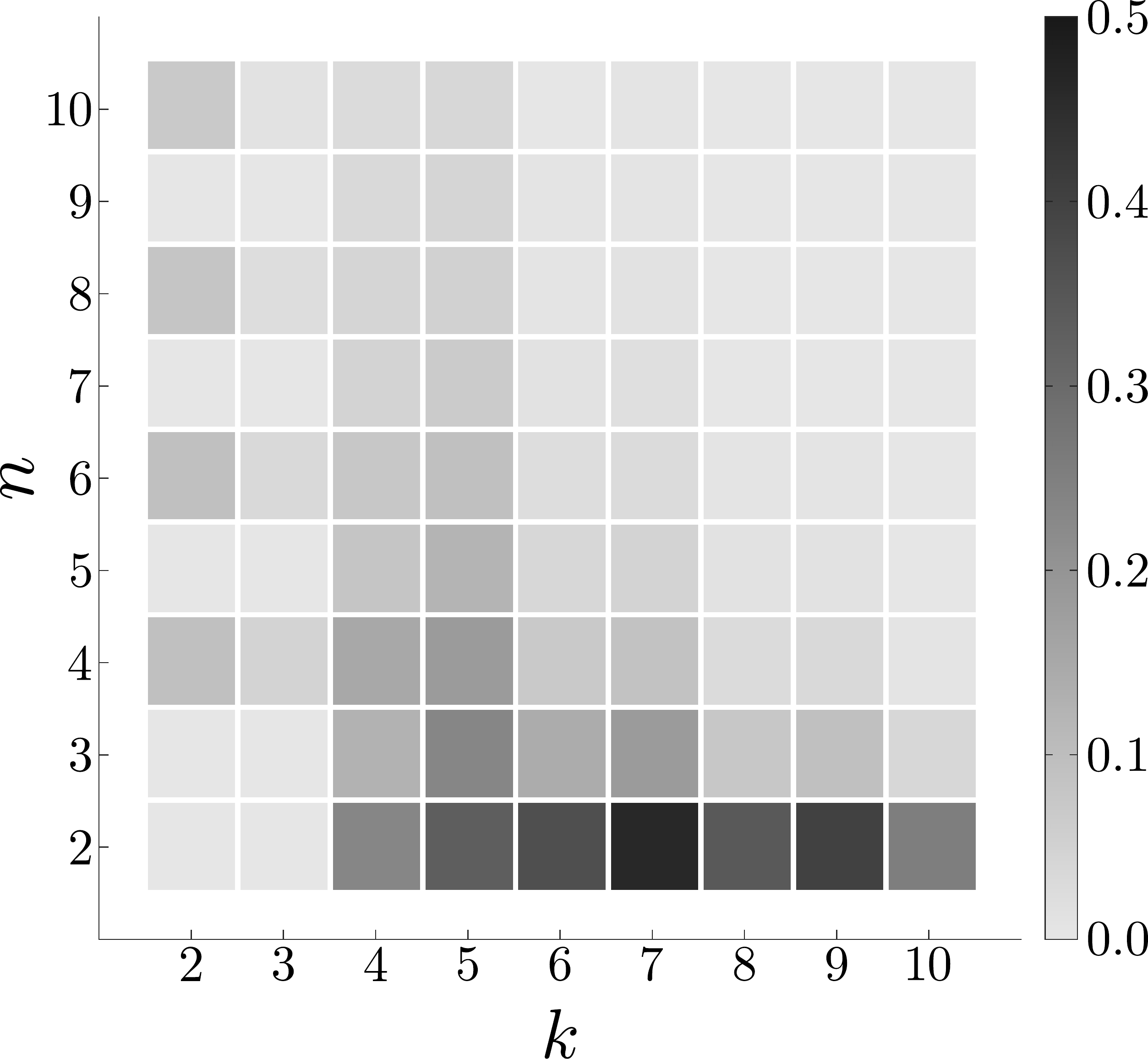} 
\caption{Plot of $r(k,n)$, the ratio of the upper bound on the number of duplicate edges in an optimal Eulerization of $uBg(k,n)$ to the optimal length $l(k,n)$ over the range $2 \leq n,k \leq 10$.}
\label{fig:ratio}
\end{figure}

\newpage






\end{document}